\newtheorem*{theorem*}{Theorem}
\newtheorem{theorem}{\textbf{Theorem}}[section]
\newtheorem{proposition}[theorem]{\textbf{Proposition}}
\newtheorem{corollary}[theorem]{\textbf{Corollary}}
\newtheorem{lemma}[theorem]{\textbf{Lemma}}
\theoremstyle{definition}
\newtheorem{definition}[theorem]{Definition}
\newtheorem{example}[theorem]{Example}
\numberwithin{equation}{section}
\DeclareMathOperator{\hh}{H}
\DeclareMathOperator{\vv}{V}
\DeclareMathOperator{\FS}{\mathfrak{S}}
\tikzstyle{path}=[line width=5pt, black, opacity=0.75]
\newcommand{\grid}[2]{

    \foreach \i in {1,...,#1}{
        \pgfmathtruncatemacro\row{int(#1-\i+1)}
        \draw (0,2*\i-1) node[xshift=-0.3cm] {$\row$} -- (2*#2, 2*\i-1);
    }

    \foreach \i in {1,...,#2}{
        \pgfmathtruncatemacro\col{\i)}
        \draw (2*\i-1,0) -- (2*\i-1,2*#1) node[yshift=0.3cm] {$\col$};
    }
}
\title{Switch Operators for the Six-Vertex Model}
\author{Evelyn Choi, Jadon Geathers, Slava Naprienko}
\begin{document}

\begin{abstract}
    In this paper, we introduce and analyze a new \textit{switch operator} for the six-vertex model. This operator, derived from the Yang-Baxter equation, allows us to express the partition function with arbitrary boundaries in terms of a base case with domain wall boundary conditions. As an application, we derive explicit formulas for the factorial Schur functions and their generalizations. Our results provide new insights into the relationship between boundary conditions and partition functions in the six-vertex model.
\end{abstract}

\maketitle
\section{Introduction}
The six-vertex model is a widely studied statistical mechanics model due to its connections to various areas of mathematics, including representation theory, combinatorics, and integrable systems. It was first introduced by Pauling in \cite{Pau35}. 

In this paper, we view the six-vertex model as a combinatorial system of paths on a lattice model under prescribed boundary conditions. Each combination of paths forms an admissible state of the model, in which we assign vertex weights to the vertices in each state. The partition function, which is a weighted sum over all possible configurations, encodes the statistical properties of the model.

A recurrent problem in the combinatorics of integrable lattice models is to find appropriate Boltzmann weights that lead to meaningful and useful values in the partition function. If these weights satisfy the Yang-Baxter equation (see \cite{Bax82}), then the partition function satisfies functional equations and can be both computed and identified with special functions from literature. For multiple instances, see \cite{BBF11,BMN14,ABPW21,Mot15,Mot17,Mot17ik,N23} and references therein.

One of the key challenges in studying the six-vertex model is understanding the dependence of the partition function on the boundary conditions, which are the states assigned to the vertices on the boundary of the lattice. Functional relations for the partition functions with different boundary conditions exist by the Yang-Baxter equation. In this paper, we introduce a novel method for analyzing this dependence: the \textit{switch operators}. These operators, derived from the Yang-Baxter equation, allow us to express the partition function with arbitrary boundary conditions in terms of a base case with simple domain wall boundary conditions. 

The main result is the following: 
\begin{theorem*}
    The partition function $Z_{\alpha,\beta}$ with right boundary $\alpha$ and top boundary $\beta$ is expressed in terms of the base case $Z_{\delta,\delta}$ as follows:
    \[
        Z_{\alpha,\beta} = \partial_\alpha^{\hh} \partial_\beta^{\vv}\left(Z_{\delta,\delta}\right),
    \]
    where $\partial_\alpha^{\hh}$ and $\partial_\beta^{\vv}$ are the switch operators.
\end{theorem*}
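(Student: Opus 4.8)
The plan is to prove the identity by induction, reducing it to a single \emph{elementary switch} and then iterating. First I would set up the combinatorics of the boundary data: an admissible right boundary $\alpha$ (resp.\ top boundary $\beta$) is a binary string recording which edges carry a path, and the set of such strings with a fixed number of paths is generated from the domain-wall string $\delta$ by adjacent transpositions $s_i$. For each $\alpha$ I would fix a reduced decomposition $\alpha = s_{i_1}\cdots s_{i_\ell}\,\delta$ and correspondingly write the switch operator as a composite $\partial_\alpha^{\hh} = \partial_{i_1}^{\hh}\cdots\partial_{i_\ell}^{\hh}$ of elementary switch operators (and likewise $\partial_\beta^{\vv}$ on the column side), regarding all of these as operators on the space of partition functions viewed as functions of the spectral parameters. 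The theorem then follows once one establishes (a) the single-step relation $Z_{s_i\alpha,\beta} = \partial_i^{\hh}(Z_{\alpha,\beta})$ together with its vertical analogue, (b) the independence of the composite from the chosen reduced word, and (c) that the horizontal and vertical switches commute.

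Step (a) is where the Yang-Baxter equation does the real work, and it is the step I expect to be the main obstacle. The idea is the standard \emph{train argument}: insert an $R$-vertex, with a spectral parameter chosen so that it implements the transposition $s_i$, on the pair of boundary edges in rows $i$ and $i+1$ at the right of the lattice, and then repeatedly apply the Yang-Baxter equation to drag that $R$-vertex leftward through the entire row until it reaches the fixed left boundary, where the domain-wall input data allow it to be absorbed with a controlled effect. Expanding the $R$-vertex over its admissible configurations at the right end then expresses $Z_{\alpha,\beta}$, with the two boundary entries in rows $i,i+1$ read off, as the announced operator applied to $Z_{s_i\alpha,\beta}$; inverting gives $Z_{s_i\alpha,\beta} = \partial_i^{\hh}(Z_{\alpha,\beta})$. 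The delicate points are: choosing the $R$-weights so the leftward slide terminates cleanly against the left boundary; tracking how the row spectral parameters $z_i,z_{i+1}$ are exchanged by the slide, so that the identity holds as an equality of rational functions in all of the parameters; and treating the boundary cases in which rows $i$ and $i+1$ already carry the swapped data, for which the $R$-vertex contributes only a diagonal factor. I would also check that the resulting $\partial_i^{\hh}$ has the divided-difference-type shape that is needed downstream for the factorial Schur formulas.

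For step (b), the braid relation $\partial_i^{\hh}\partial_{i+1}^{\hh}\partial_i^{\hh} = \partial_{i+1}^{\hh}\partial_i^{\hh}\partial_{i+1}^{\hh}$ and the far commutation $\partial_i^{\hh}\partial_j^{\hh} = \partial_j^{\hh}\partial_i^{\hh}$ for $|i-j|>1$ follow formally: the latter is locality (the two $R$-vertices act on disjoint rows), and the former is the braid relation satisfied by the $R$-matrix, which is itself equivalent to the Yang-Baxter equation; Matsumoto's theorem then yields that $\partial_\alpha^{\hh}$ is independent of the chosen reduced word. Step (c) is again locality: a horizontal $R$-vertex lives on the right edge of the lattice and a vertical one on the top edge, so the two families act on disjoint portions of the lattice and commute, whence $\partial_\alpha^{\hh}$ and $\partial_\beta^{\vv}$ may be applied in either order. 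Finally I would assemble the induction: starting from $Z_{\delta,\delta}$, apply the vertical elementary switches along a reduced word for $\beta$ to reach $Z_{\delta,\beta}$, then the horizontal elementary switches along a reduced word for $\alpha$ to reach $Z_{\alpha,\beta} = \partial_\alpha^{\hh}\partial_\beta^{\vv}(Z_{\delta,\delta})$, with the base case $\alpha=\beta=\delta$ being the tautology, since both composites are then empty.
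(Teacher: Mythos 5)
Your proposal follows essentially the same route as the paper: establish the single-step relation $Z_{s_i\alpha,\beta}=\partial_i^{\hh}(Z_{\alpha,\beta})$ via the train argument (sliding a cross vertex from the frozen empty boundary across to the $\alpha$-boundary and expanding it there), then compose these elementary switches along a word carrying $\delta$ to $\alpha$, and analogously for $\beta$ in the vertical direction. The only divergence is that the paper defines $\partial_\alpha^{\hh}$ by one explicit word and applies the horizontal and vertical operators in a fixed order, so your steps (b) and (c) (reduced-word independence via braid relations, and commutation of the two families) are never needed there.
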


As an application, we compute the explicit expression for the factorial Schur functions and their generalizations. We also prove that these functions are asymptotically symmetric in column parameters. These results extend and generalize those from Section 7 of \cite{BMN14}.

\bigskip
   
\textbf{Acknowledgements.} This paper was created through the 2022 Stanford Undergraduate Research in Mathematics (SURIM) program. We would like to thank everyone involved in organizing SURIM, and in particular we thank Lernik Asserian for directing the program.

\section{Six-vertex model}
In this section, we review the six-vertex model from statistical mechanics and introduce notation. For a treatment from the point of view of statistical mechanics, see \cite{Bax82}. Here, we represent the model as a rectangular lattice with paths traveling from northwest to southeast. That is, paths enter the model from the top or the left and leave the model on the right and bottom. Paths may intersect, but their movement is restricted to the right and downward directions. Thus, there are only six admissible states for a vertex, illustrated in \Cref{fig:sixtypies}.

\begin{figure}
    \centering
    \begin{tabularx}{0.8\textwidth} { 
      | >{\centering\arraybackslash}X 
      | >{\centering\arraybackslash}X 
       | >{\centering\arraybackslash}X 
         | >{\centering\arraybackslash}X 
           | >{\centering\arraybackslash}X 
      | >{\centering\arraybackslash}X | }
     \hline
    \begin{tikzpicture}[baseline={([yshift=-.3ex]current bounding box.center)}]
    \draw[black](-1,0) -- (1,0);
    \draw[black] (0, 1) -- (0,-1);
    \end{tikzpicture} &
    \begin{tikzpicture}[baseline={([yshift=-.3ex]current bounding box.center)}]
    \draw[black](-1,0) -- (1,0);
    \draw[black] (0, 1) -- (0,-1);
    \draw[red, line width = 2.3](-1,0) -- (1,0);
    \draw[red, line width = 2.3] (0, 1) -- (0,-1);
    \end{tikzpicture} 
    & 
    \begin{tikzpicture}[baseline={([yshift=-.3ex]current bounding box.center)}]
    \draw[black](-1,0) -- (1,0);
    \draw[black] (0, 1) -- (0,-1);
    \draw[red, line width = 2.3] (0,1) -- (0, -1);
    \end{tikzpicture}
    & 
    \begin{tikzpicture}[baseline={([yshift=-.3ex]current bounding box.center)}]
    \draw[black](-1,0) -- (1,0);
    \draw[black] (0, 1) -- (0,-1);
    \draw[red, line width = 2.3] (1,0) -- (-1, 0);
    \end{tikzpicture}& 
    \begin{tikzpicture}[baseline={([yshift=-.3ex]current bounding box.center)}]
    \draw[black](-1,0) -- (1,0);
    \draw[black] (0, 1) -- (0,-1);
    \draw[red, line width = 2.3] (1,0) -- (0, 0) -- (0, 1);
    \end{tikzpicture}& 
    \begin{tikzpicture}[baseline={([yshift=-.3ex]current bounding box.center)}]
    \draw[black](-1,0) -- (1,0);
    \draw[black] (0, 1) -- (0,-1);
    \draw[red, line width = 2.3] (-1,0) -- (0, 0) -- (0, -1);
    \end{tikzpicture}\\
     \hline
     \;$a_1$\; & $a_2$  & $b_1$ & $b_2$ & $c_1$ & $c_2$ \\
    \hline
    \end{tabularx}
    \caption{Six admissible states named $a_1,a_2,b_1,b_2,c_1,c_2$ following \cite{Bax82}}
    \label{fig:sixtypies}
\end{figure}
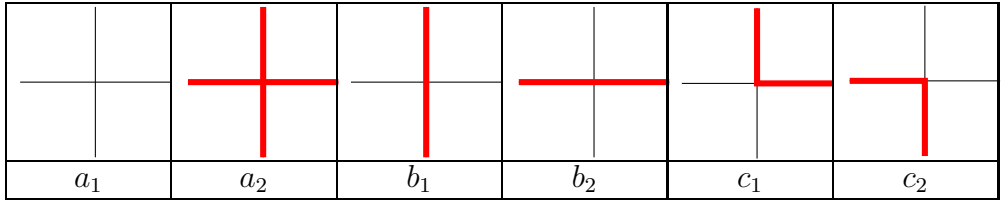

Defining a six-vertex model requires specifying the size of the grid, as well as the boundary conditions (at what positions the paths enter or leave the grid). Given such a model, we have a system of admissible configurations of paths. Each configuration is called a \textit{state}, and each state consists only of the six mentioned vertices. The weight of a given type of vertex at a certain position in the lattice is represented using the following weight functions:
\[
    a_1,a_2,b_1,b_2,c_1,c_2\colon \mathbb{Z} \times \mathbb{Z} \to \mathbb{Z}.
\]

For each state $s$, its weight $w(s)$ is given by computing the product of weights over all vertices in the state. Then the \textit{partition function} $Z(\mathfrak{S})$ of a six-vertex model $\mathfrak{S}$ is the sum of weights of all admissible states:
\[
    Z(\mathfrak{S}) = \sum_{\text{states}}\prod_{\text{vertices}}W\text{(vertex)}.
\]

More generally, we consider the six-vertex model with row labels $I = (I_1,I_2,\dots,I_n)$, column labels $J = (J_1,J_2,\dots,J_m)$, and boundary conditions $\beta = (\beta^l,\beta^t,\beta^r,\beta^b)$ for the left, top, right, and bottom boundaries, respectively. We denote such a model by 
\[
    \FS(I;J;\beta) = \FS(I_1,\dots,I_n; J_1,\dots,J_m; \beta^l, \beta^t, \beta^r, \beta^b).
\]

For brevity, we denote the partition function of such a model by
\[
    Z(I;J; \beta^l,\beta^t,\beta^r,\beta^b) = Z(\mathfrak{S}(I;J; \beta^l,\beta^t,\beta^r,\beta^b)).
\]

Lastly, we use the notation $[n] = (1, 2, \dots, n)$. 
\begin{definition}\label{def:dwbc}[DWBC]
    The six vertex model with \textit{domain wall boundary conditions} (abbreviated as DWBC) is defined as $\FS_n^{\operatorname{DWBC}} = \FS([n];[n];\emptyset,[n],[n],\emptyset)$. This is an $n \times n$ lattice with the following boundaries: paths enter from the top edge of each column and exit from the right edge of each row. The partition function for this model is denoted as $Z_n^{\operatorname{DWBC}} = Z(\FS_n^{\operatorname{DWBC}})$. An illustration of the DWBC model is shown in Figure \ref{fig:dwbc}. 
\end{definition}

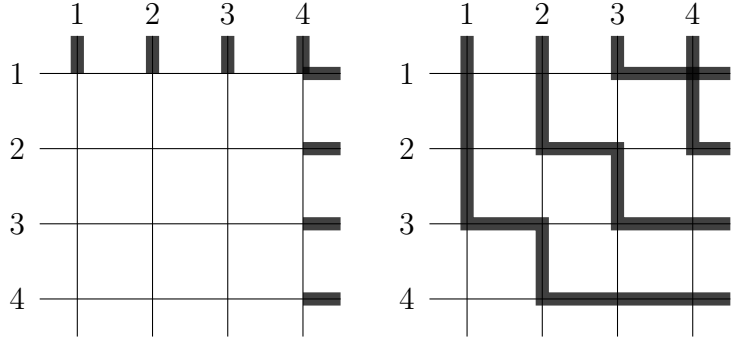
\begin{figure}
    \centering
    \begin{tikzpicture}[xscale=0.5, yscale=0.5]
        \grid{4}{4}
        
        \draw[path] (1,8) to (1,7);
        \draw[path] (3,8) to (3,7);
        \draw[path] (5,8) to (5,7);
        \draw[path] (7,8) to (7,7);
        
        \draw[path] (7,1) to (8,1);
        \draw[path] (7,3) to (8,3);
        \draw[path] (7,5) to (8,5);
        \draw[path] (7,7) to (8,7);
    \end{tikzpicture} \quad 
    \begin{tikzpicture}[xscale=0.5, yscale=0.5]
        \grid{4}{4}
        
        \draw[path] (1,8) to (1,3) to (3,3) to (3,1) to (8,1);
        \draw[path] (3,8) to (3,5) to (5,5) to (5,3) to (8,3);
        \draw[path] (5,8) to (5,7) to (8,7);
        \draw[path] (7,8) to (7,5) to (8,5);
    \end{tikzpicture}
    \caption{The model $\mathfrak{S}_n^{\operatorname{DWBC}}$ for $n=4$ and a typical state in the model.}
    \label{fig:dwbc}
\end{figure}

\begin{figure}
    \centering
    \begin{tabular}[t]{c|c}
    state & weight \\
    
    $\vcenter{\hbox{\begin{tikzpicture}[scale=0.4]
        \grid{3}{3}
        
        \draw[path] (1,6) to (1,5) to (3,5) to (3,3) to (5,3) to (5,1) to (6,1);
        \draw[path] (3,6) to (3,5) to (5,5) to (5,3) to (6,3);
        \draw[path] (5,6) to (5,5) to (6,5);
    \end{tikzpicture}}}$ &
    $\begin{aligned}[c]
        c_1(1,1)a_2(1,2)a_2(1,3)\\
        a_1(2,1)c_1(2,2)a_2(2,3)\\
        a_1(3,1)a_1(3,2)c_1(3,3)
    \end{aligned}$
    \\
    $\vcenter{\hbox{\begin{tikzpicture}[scale=0.4]
        \grid{3}{3}
        
        \draw[path] (1,6) to (1,5) to (3,5) to (3,3) to (3,1) to (6,1);
        \draw[path] (3,6) to (3,5) to (5,5) to (5,3) to (6,3);
        \draw[path] (5,6) to (5,5) to (6,5);
    \end{tikzpicture}}}$ &
    $\begin{aligned}[c]
        c_1(1,1)a_2(1,2)a_2(1,3)\\
        a_1(2,1)b_1(2,2)c_1(2,3)\\
        a_1(3,1)c_1(3,2)b_2(3,3)
    \end{aligned}$\\
    
    $\vcenter{\hbox{\begin{tikzpicture}[scale=0.4]
\grid{3}{3}

        \draw[path] (1,6) to (1,5) to (1,3) to (3,3) to (3,1) to (6,1);
        \draw[path] (3,6) to (3,5) to (5,5) to (5,3) to (6,3);
        \draw[path] (5,6) to (5,5) to (6,5);
    \end{tikzpicture}}}$ &
    $\begin{aligned}[c]
        b_1(1,1)c_1(1,2)a_2(1,3)\\
        c_1(2,1)c_2(2,2)c_1(2,3)\\
        a_1(3,1)c_1(3,2)b_2(3,3)
    \end{aligned}$
    \\
    $\vcenter{\hbox{\begin{tikzpicture}[scale=0.4]
        \grid{3}{3}
        
        \draw[path] (1,6) to (1,1) to (6,1);
        \draw[path] (3,6) to (3,3) to (6,3);
        \draw[path] (5,6) to (5,5) to (6,5);
    \end{tikzpicture}}}$ &
    $\begin{aligned}[c]
        b_1(1,1)b_1(1,2)c_1(1,3)\\
        b_1(2,1)c_1(2,2)b_2(2,3)\\
        c_1(3,1)b_2(3,2)b_2(3,3)
    \end{aligned}$ \\
    $\vcenter{\hbox{\begin{tikzpicture}[scale=0.4]
    \grid{3}{3}
        \draw[path] (1,6) to (1,3) to (6,3);
        \draw[path] (3,6) to (3,1) to (6,1);
        \draw[path] (5,6) to (5,5) to (6,5);
    \end{tikzpicture}}}$ &
    $\begin{aligned}[c]
        b_1(1,1)b_1(1,2)c_1(1,3)\\
        c_1(2,1)a_2(2,2)b_2(2,3)\\
        a_1(3,1)c_1(3,2)b_2(3,3)
    \end{aligned}$
    \\
    $\vcenter{\hbox{\begin{tikzpicture}[scale=0.4]
    \grid{3}{3}
        \draw[path] (1,6) to (1,3) to (6,3);
        \draw[path] (3,6) to (3,5) to (6,5);
        \draw[path] (5,6) to (5,1) to (6,1);
    \end{tikzpicture}}}$ &
    $\begin{aligned}[c]
        b_1(1,1)c_1(1,2)a_2(1,3)\\
        c_1(2,1)b_2(2,2)a_2(2,3)\\
        a_1(3,1)a_1(3,2)c_1(3,3)
    \end{aligned}$
    \\
    $\vcenter{\hbox{\begin{tikzpicture}[scale=0.4]
    \grid{3}{3}
        \draw[path] (1,6) to (1,1) to (6,1);
        \draw[path] (3,6) to (3,5) to (6,5);
        \draw[path] (5,6) to (5,3) to (6,3);
    \end{tikzpicture}}}$ &
    $\begin{aligned}[c]
        b_1(1,1)c_1(1,2)a_2(1,3)\\
        b_1(2,1)a_1(2,2)c_1(2,3)\\
        c_1(3,1)b_2(3,2)b_2(3,3)
    \end{aligned}$
    \\
    
    \end{tabular}
    \caption{Admissible states with Boltzmann weights}
    \label{fig:states}
\end{figure}

\begin{example}
    Let $n = 3$. Then there are seven admissible configurations in $\mathfrak{S}_n^{\operatorname{DWBC}}$. See \Cref{fig:states} for the complete list of admissible states with their Boltzmann weights. The partition function $Z_3^{\operatorname{DWBC}}$ then is the sum of all the weights of the configurations. Note that we write the product of weights aligned with the positions where they occur in the model for convenience.
    
    With the seven admissible configurations of the $3 \times 3$ lattice and all the defined vertex weights, we can then compute the partition function by summing all seven products.
\end{example}

\begin{definition}[Yang-Baxter]
We say that a six-vertex model is \textit{integrable} if, given its weight functions $a_1,\dots,c_2$, there exist new weight functions 
\begin{align*}
    a_1^{\hh},a_2^{\hh},b_1^{\hh},b_2^{\hh},c_1^{\hh},c_2^{\hh}\colon \mathbb{Z} \times \mathbb{Z} \to \mathbb{C},\\
    a_1^{\vv},a_2^{\vv},b_1^{\vv},b_2^{\vv},c_1^{\vv},c_2^{\vv}\colon \mathbb{Z} \times \mathbb{Z} \to \mathbb{C},
\end{align*}
such that the \textit{Yang-Baxter equation} holds, i.e. we have the equality of the following partitions:

\tikzset{vertex/.style={draw, circle, fill=black, minimum size=4pt, inner sep=0pt}}

\tikzset{vertex/.style={draw, circle, fill=black, minimum size=4pt, inner sep=0pt}}

\begin{equation*}
    \begin{tikzpicture}[anchor=base, baseline=(current bounding box.center), line width=1.2pt]
    
    \node[vertex] at (1,1) {};
    \node[vertex] at (3,0) {};
    \node[vertex] at (3,2) {};
    
    \node[left] at (0,0) {$i_1$};
    \node[left] at (0,2) {$j_1$};
    \node[above right] at (2,2) {$i_3$};
    \node[above right] at (2,0) {$j_3$};
    \node[above] at (3, 2.5) {$k_1$};
    \node[below] at (3, -0.5) {$k_2$};
    \node[right] at (3, 1) {$k_3$};
    
    \draw (0,0) -- (2,2) -- (4,2);
    \draw (0,2) -- (2,0) -- (4,0);
    \draw (3,-0.5) -- (3,1) -- (3,2.5);
    
    \end{tikzpicture} = 
    \begin{tikzpicture}[anchor=base, baseline=(current bounding box.center), line width=1.2pt]
    
    \node[vertex] at (3,1) {};
    \node[vertex] at (1,2) {};
    \node[vertex] at (1,0) {};
    
    \node[below right] at (4,0) {$i_1$};
    \node[right] at (4,2) {$j_1$};
    \node[above right] at (2,2) {$i_3$};
    \node[below right] at (2,0) {$j_3$};
    \node[above] at (1, 2.5) {$k_1$};
    \node at (1, -1) {$k_2$};
    \node[left] at (1, 1) {$k_3$};
    
    \draw (0,0) -- (2,0) -- (4,2);
    \draw (0,2) -- (2,2) -- (4,0);
    \draw (1, -0.5) -- (1, 2.5);
    
    \end{tikzpicture}
\end{equation*}

In the graphical equation, $i_1,j_1,k_1,i_2,j_2,k_2 \in \{0,1\}$ are fixed, and $i_3,j_3,k_3 \in \{0,1\}$ must iterate through all indices. An index of $1$ indicates the existence of a path, while $0$ indicates its absence.

Similarly, we define vertically integrable weights using the graphical equation:

\begin{equation*}
    \begin{tikzpicture}[anchor=base, baseline=(current bounding box.center), line width=1.2pt]
    
    \node[vertex] at (1, 2) {};
    \node[vertex] at (0,0) {};
    \node[vertex] at (2,0) {};
    
    \node[left] at (0,3) {$i_1$};
    \node[right] at (2,3) {$j_1$};
    \node[right] at (2, 1) {$i_3$};
    \node[left] at (0, 1) {$j_3$};
    \node[right] at (2.5, 0) {$k_1$};
    \node[left] at (-0.5, 0) {$k_2$};
    \node[below] at (1, 0) {$k_3$};
    
    \draw (0, 3) -- (1,2) -- (2, 3);
    \draw (0, 1) -- (1,2) -- (2,1);
    \draw (0,1) -- (0,0);
    \draw (2, 1) -- (2, 0);
    \draw (-0.5, 0) -- (2.5, 0);
    \draw (0,0) -- (0, -1);
    \draw (2, 0) -- (2, -1);

    \end{tikzpicture} = 
    \begin{tikzpicture}[anchor=base, baseline=(current bounding box.center), line width=1.2pt]
    
    \node[vertex] at (5, 0) {};
    \node[vertex] at (4,2) {};
    \node[vertex] at (6,2) {};
    
    \node[left] at (4,-1) {$i_1$};
    \node[right] at (6,-1) {$j_1$};
    \node[right] at (6, 1) {$i_3$};
    \node[left] at (4, 1) {$j_3$};
    \node[right] at (6.5, 2) {$k_1$};
    \node[left] at (3.5, 2) {$k_2$};
    \node[below] at (5, 2) {$k_3$};
    
    \draw (4, -1) -- (5,0) -- (6, -1);
    \draw (4, 1) -- (5,0) -- (6,1);
    \draw (4,1) -- (4,2);
    \draw (6, 1) -- (6, 2);
    \draw (3.5, 2) -- (6.5, 2);
    \draw (4,2) -- (4, 3);
    \draw (6, 2) -- (6, 3);
    
    \end{tikzpicture}
\end{equation*}

where $i_1, j_1, k_1, i_2, j_2, k_2 \in \{ 0,1\}$ are again fixed, and $i_3, j_3, k_3 \in \{0, 1\}$ iterate through all their possibles values.  

\end{definition}

The introduction of cross vertices to facilitate the Yang-Baxter equation, as illustrated in the above figure, gives rise to horizontal and vertical cross vertices. We provide the six allowable horizontal cross vertices of the six-vertex model in \Cref{fig:hcross}. Similarly, the six allowable vertical cross vertices of the six-vertex model are shown in \Cref{fig:vcross}.

\begin{figure}
    \centering
    \begin{tabularx}{0.8\textwidth} { 
      | >{\centering\arraybackslash}X 
      | >{\centering\arraybackslash}X 
       | >{\centering\arraybackslash}X 
         | >{\centering\arraybackslash}X 
           | >{\centering\arraybackslash}X 
      | >{\centering\arraybackslash}X | }
     \hline
     
    \begin{tikzpicture}[baseline={([yshift=-.3ex]current bounding box.center)}]
    \draw[black](-1,1) -- (1,-1);
    \draw[black] (-1, -1) -- (1, 1);
    \end{tikzpicture} &
    \begin{tikzpicture}[baseline={([yshift=-.3ex]current bounding box.center)}]
    \draw[black](-1,1) -- (1,-1);
    \draw[black] (-1, -1) -- (1, 1);
    \draw[red, line width = 2.3](-1,1) -- (1,-1);
    \draw[red, line width = 2.3] (-1, -1) -- (1, 1);
    \end{tikzpicture} 
    & 
    \begin{tikzpicture}[baseline={([yshift=-.3ex]current bounding box.center)}]
    \draw[black](-1,1) -- (1,-1);
    \draw[black] (-1, -1) -- (1, 1);
    \draw[red, line width = 2.3] (-1,1) -- (1, -1);
    \end{tikzpicture}
    & 
    \begin{tikzpicture}[baseline={([yshift=-.3ex]current bounding box.center)}]
    \draw[black](-1,1) -- (1,-1);
    \draw[black] (-1, -1) -- (1, 1);
    \draw[red, line width = 2.3] (-1,-1) -- (1, 1);
    \end{tikzpicture}& 
    \begin{tikzpicture}[baseline={([yshift=-.3ex]current bounding box.center)}]
    \draw[black](-1,1) -- (1,-1);
    \draw[black] (-1, -1) -- (1, 1);
    \draw[red, line width = 2.3] (-1,1) -- (0, 0) -- (1, 1);
    \end{tikzpicture}& 
    \begin{tikzpicture}[baseline={([yshift=-.3ex]current bounding box.center)}]
    \draw[black](-1,1) -- (1,-1);
    \draw[black] (-1, -1) -- (1, 1);
    \draw[red, line width = 2.3] (-1,-1) -- (0, 0) -- (1, -1);
    \end{tikzpicture}\\
     \hline
     \;$a_1^{\hh}$\; & $a_2^{\hh}$  & $b_1^{\hh}$ & $b_2^{\hh}$ & $c_1^{\hh}$ & $c_2^{\hh}$ \\
    \hline
    \end{tabularx}
    \caption{Horizontal cross vertices}
    \label{fig:hcross}
\end{figure}
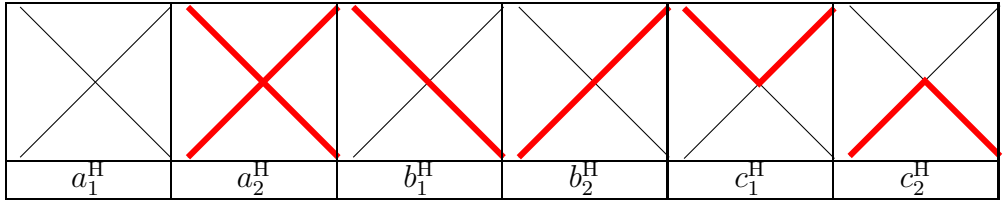

\begin{figure}
    \centering
    \begin{tabularx}{0.8\textwidth} { 
      | >{\centering\arraybackslash}X 
      | >{\centering\arraybackslash}X 
       | >{\centering\arraybackslash}X 
         | >{\centering\arraybackslash}X 
           | >{\centering\arraybackslash}X 
      | >{\centering\arraybackslash}X | }
     \hline
     
    \begin{tikzpicture}[baseline={([yshift=-.3ex]current bounding box.center)}]
    \draw[black](-1,1) -- (1,-1);
    \draw[black] (-1, -1) -- (1, 1);
    \end{tikzpicture} &
    \begin{tikzpicture}[baseline={([yshift=-.3ex]current bounding box.center)}]
    \draw[black](-1,1) -- (1,-1);
    \draw[black] (-1, -1) -- (1, 1);
    \draw[red, line width = 2.3](-1,1) -- (1,-1);
    \draw[red, line width = 2.3] (-1, -1) -- (1, 1);
    \end{tikzpicture} 
    & 
    \begin{tikzpicture}[baseline={([yshift=-.3ex]current bounding box.center)}]
    \draw[black](-1,1) -- (1,-1);
    \draw[black] (-1, -1) -- (1, 1);
    \draw[red, line width = 2.3] (-1,-1) -- (1, 1);
    \end{tikzpicture}
    & 
    \begin{tikzpicture}[baseline={([yshift=-.3ex]current bounding box.center)}]
    \draw[black](-1,1) -- (1,-1);
    \draw[black] (-1, -1) -- (1, 1);
    \draw[red, line width = 2.3] (-1,1) -- (1, -1);
    \end{tikzpicture}& 
    \begin{tikzpicture}[baseline={([yshift=-.3ex]current bounding box.center)}]
    \draw[black](-1,1) -- (1,-1);
    \draw[black] (-1, -1) -- (1, 1);
    \draw[red, line width = 2.3] (-1,1) -- (0, 0) -- (-1, -1);
    \end{tikzpicture}& 
    \begin{tikzpicture}[baseline={([yshift=-.3ex]current bounding box.center)}]
    \draw[black](-1,1) -- (1,-1);
    \draw[black] (-1, -1) -- (1, 1);
    \draw[red, line width = 2.3] (1,1) -- (0, 0) -- (1, -1);
    \end{tikzpicture}\\
     \hline
     \;$a_1^{\vv}$\; & $a_2^{\vv}$  & $b_1^{\vv}$ & $b_2^{\vv}$ & $c_1^{\vv}$ & $c_2^{\vv}$ \\
    \hline
    \end{tabularx}
    \caption{Vertical cross vertices}
    \label{fig:vcross}
\end{figure}
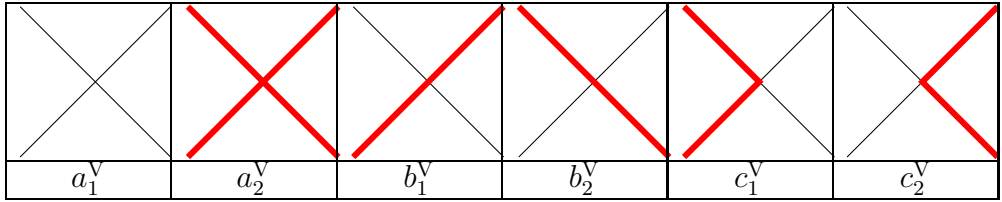

We now provide an extension of the Yang-Baxter equation, called the train argument. This argument is the basis of our construction of the switch operators.

\begin{lemma}\label{train arg}[Train argument] 
    Consider a six-vertex model and attach a cross vertex. Then the following relationship between partition functions holds. 

\begin{center}
\begin{tikzpicture}
\node[circle, minimum size=20pt] (sig2) at (7, 1) {$\displaystyle{\Sigma_{\alpha', \beta', \gamma'}}$};
\node[circle, minimum size=12pt, draw=black] (10) at (8,0) {};
\node[circle, minimum size=12pt, scale=0.7, draw=black] (20) at (10,0) {$\beta'$};
\node[circle, minimum size=12pt, scale=0.73, draw=black] (30) at (10,2) {$\alpha'$};
\node[circle, minimum size=12pt, draw=black] (40) at (8,2) {};
\node[circle, minimum size=12pt] (50) at (4, 3) {};
\node[circle, minimum size=12pt, scale=0.7, draw=black] (60) at (11,1) {$\gamma'$};
\node[circle, minimum size=12pt, draw=black] (70) at (12,2) {};
\node[circle, minimum size=12pt] (80) at (11,-1) {};
\node[circle, minimum size=12pt, draw=black] (90) at (12,0) {};
\draw (7.5,2) -- (40) -- (20) -- (90);
\draw (7.5,0) -- (10) -- (30) -- (70);
\draw (11,-0.5) -- (60) -- (11, 2.5);
\draw (70) -- (12.5, 2);
\draw (90) -- (12.5, 0);

\node[circle, minimum size=20pt] (eq) at (6, 1) {$=$};

\node[circle, minimum size=20pt] (sig1) at (0, 1) {$\displaystyle{\Sigma_{\alpha, \beta, \gamma}}$};
\node[circle, minimum size=12pt, draw=black] (1) at (1,0) {};
\node[circle, minimum size=12pt, scale=0.7, draw=black] (2) at (3,0) {$\beta$};
\node[circle, minimum size=12pt, scale=0.8, draw=black] (3) at (3,2) {$\alpha$};
\node[circle, minimum size=12pt, draw=black] (4) at (1,2) {};
\node[circle, minimum size=12pt] (5) at (4, 3) {};
\node[circle, minimum size=12pt, scale=0.8, draw=black] (6) at (2,1) {$\gamma$};
\node[circle, minimum size=12pt, draw=black] (7) at (5,2) {};
\node[circle, minimum size=12pt] (8) at (4,-1) {};
\node[circle, minimum size=12pt, draw=black] (9) at (5,0) {};
\draw (0.5,2) -- (4) -- (3) -- (9);
\draw (0.5,0) -- (1) -- (2) -- (7);
\draw (2,-0.5) -- (6) -- (2,2.5);
\draw (7) -- (5.5, 2);
\draw (9) -- (5.5, 0);
\end{tikzpicture}
\end{center}
    
    For example, 
    \[
        a_1^{\hh}(1,2)Z(1,2; k; \emptyset, \beta^t, (1,2), \beta^b) = a_2^{\hh}(1,2)Z(2,1; k; \emptyset; \beta^t, (1,2); \beta^b).
    \]
    This argument also holds in the vertical case by instead using the vertical Yang-Baxter equation. Below is an example of the mechanics of the train argument.
\[
    \vcenter{\hbox{\begin{tikzpicture}[scale=0.5]
        
        \foreach \i in {1,2}{
        \pgfmathtruncatemacro\row{int(2-\i+1)}
        \draw (0,2*\i-1) node[xshift=-1.3cm] {$\row$} -- (4, 2*\i-1);
    }

    \foreach \i in {1,2}{
        \pgfmathtruncatemacro\col{\i)}
        \draw (2*\i-1,0) -- (2*\i-1,4) node[yshift=0.3cm] {$\col$};
    }
        \draw[black, thin] (-2,1) to (0,3);
        \draw[black, thin] (-2,3) to (0,1);
        
        \draw[path] (3,1) to (4,1);
        \draw[path] (3,3) to (4,3);
        
        \draw[path] (1,3) to (1,4);
        \draw[path] (3,3) to (3,4);

    \end{tikzpicture}}}
=
\vcenter{\hbox{\begin{tikzpicture}[scale=0.5]
        \foreach \i in {1,2}{
        \pgfmathtruncatemacro\row{int(\i)}
        \draw (0,2*\i-1) node[xshift=-0.3cm] {$\row$} -- (4, 2*\i-1);
    }

    \foreach \i in {1,2}{
        \pgfmathtruncatemacro\col{\i)}
        \draw (2*\i-1,0) -- (2*\i-1,4) node[yshift=0.3cm] {$\col$};
    }
        \draw[path] (3,3) to (4,3) to (6,1);
        \draw[path] (3,1) to (4,1) to (6,3);
        
        \draw[path] (1,3) to (1,4);
        \draw[path] (3,3) to (3,4);

    \end{tikzpicture}}}
\]
\end{lemma}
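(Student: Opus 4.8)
The plan is to derive the train argument from the Yang--Baxter equation by sliding the attached cross vertex through the lattice one vertex at a time; this is exactly the ``train'' picture, with the cross vertex playing the locomotive that runs along the track formed by the vertical lines (in the horizontal case) or the horizontal lines (in the vertical case).

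First I would localize the statement. A cross vertex is attached to two adjacent rows of the model --- those carrying the data written $\alpha,\beta$ in the statement --- and every manipulation below touches only the vertices in those two rows. Expanding the partition function as a sum over all internal edge states, one factors out the edges and weights belonging to the rows strictly above and strictly below this two-row strip; for each fixed assignment of the edge labels along the top and the bottom of the strip, what is left is the partition function of the two-row strip with the cross vertex attached at one end. Hence it suffices to prove the identity for a two-row lattice with a cross vertex at one end and arbitrary but fixed boundary labels on all sides; re-summing then yields the general claim. (For the vertical train argument, read ``columns'' for ``rows'' and use the vertical Yang--Baxter equation throughout.)

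For the two-row strip I would induct on its width $m$. The case $m = 1$ is exactly the Yang--Baxter equation: the cross vertex together with the unique column forms the triangle on one side of the equation, and replacing it by the triangle on the other side transports the cross vertex across the column while leaving the sum over the internal edges unchanged. For the inductive step, apply the Yang--Baxter equation to the cross vertex and the leftmost column; this produces an equal quantity in which the cross vertex now stands between columns $1$ and $2$, and the cross vertex together with columns $2, \dots, m$ is a two-row strip of width $m - 1$ with the cross at its left end, so the inductive hypothesis completes the slide. Chasing the labels through the $m$ applications shows that the two rows exchange their labels --- in the displayed example $(1,2)$ is replaced by $(2,1)$ --- and that the cross vertex emerging on the far side carries the weight prescribed by the Yang--Baxter equation (in the example $a_1^{\hh}(1,2)$ on the left becomes $a_2^{\hh}(1,2)$ on the right); the $2 \times 2$ diagram closing the lemma is two applications of this step.

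The only real difficulty is bookkeeping, not analysis. One must check that at each stage the local picture on which the Yang--Baxter equation is invoked is an honest triangle, so that the edges the equation holds fixed and the edges it sums over coincide with, respectively, the adjacent/boundary edges and the internal edges of the ambient lattice; and one must confirm that the boundary data on the top, the bottom, and the far end are permuted in exactly the way recorded by the passage from $\Sigma_{\alpha,\beta,\gamma}$ to $\Sigma_{\alpha',\beta',\gamma'}$. Once the indices are organized so that every application of the Yang--Baxter equation is legitimate, the resulting chain of equalities gives the lemma with nothing further to prove.
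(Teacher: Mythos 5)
Your proposal is correct and is essentially the paper's own argument, which simply states that one repeatedly applies the Yang--Baxter equation to slide the cross vertex across the lattice until it reaches the opposite boundary; your localization to the two-row strip and induction on its width is just that repetition made explicit. The only point the paper adds that you leave implicit is that the cross-vertex weights are assumed not to depend on the column (resp.\ row) being crossed, which is what guarantees the same cross weight appears at every step of the slide.
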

\begin{proof}
    Since we assume that the weights of the horizontal (vertical) cross vertices do not denend on the column (row), we can repeately apply the Yang-Baxter equation until we reach the opposite boundary. 
\end{proof}

\section{Switch operators}
In this section we define the generalized Demazure operators, which we call the switch operators, that can be applied to both horizontal and vertical boundaries. We then axiomatically develop the algebra of partition functions under the switch operators. 

Let
\begin{align*}
    a_1, a_2, b_1, b_2, c_1, c_2 &\colon \mathbb{Z} \times \mathbb{Z} \to \mathbb{C},\\
    a_1^{\hh},a_2^{\hh},b_1^{\hh},b_2^{\hh},c_1^{\hh},c_2^{\hh} &\colon \mathbb{Z} \times \mathbb{Z} \to \mathbb{C},\\
    a_1^{\vv},a_2^{\vv},b_1^{\vv},b_2^{\vv},c_1^{\vv},c_2^{\vv} &\colon \mathbb{Z} \times \mathbb{Z} \to \mathbb{C},
\end{align*}
be the integrable weight functions, where $b_1^{\hh},b_2^{\hh},b_1^{\vv},b_2^{\vv}$ are non-zero, and where
\begin{align*}
    a_1^{\hh}(i+1,i)a_1^{\hh}(i,i+1) = b_1^{\hh}(i+1,i)b_2^{\hh}(i+1,i) + c_1^{\hh}(i+1,i)c_2^{\hh}(i+1,i) \\
    a_1^{\vv}(i+1,i)a_1^{\vv}(i,i+1) = b_1^{\vv}(i+1,i)b_2^{\vv}(i+1,i) + c_1^{\vv}(i+1,i)c_2^{\vv}(i+1,i). 
\end{align*}

Note that the last condition is equivalent to assuming that the $R$-matrix of the cross-vertex is invertible. This is demonstrated in detail in \cite{N22}.

Let $\alpha = (\alpha_1,\dots,\alpha_d) \in \mathbb{N}^d$ be a strictly decreasing signature of length $d$. Let $\delta_d = (d,d-1,\dots,1)$ and let $n,m \geq d$. 

We consider the six-vertex model $\mathfrak{S}^{n,m}(\emptyset, \delta_d,\alpha,\emptyset)$, that is, a model with $n$ rows and $m$ columns. The lattice consists of an empty left boundary, a dense top boundary, a boundary of $\alpha$ on the right, and an empty bottom boundary.

Let $Z_\alpha$ denote the partition function of this model. The main aim of this section is to give connection between the partition functions $Z_\alpha$ for different $\alpha$'s. 

The partition functions $Z_\alpha$ depend on the spectral parameters $I = (i_1,\dots,i_n)$ and $J = (j_1,\dots,j_m)$. Let the permutation group $S_{\infty}$ act on the spectral parameters as follows: 
\[
    \pi I = (i_{\pi(1)},\dots,i_{\pi(n)}).
\]

We also define the action of the simple transposition $s_i$ on the rightmost boundary as follows:
\begin{enumerate}
    \item if $i,i+1 \in \alpha$, then $s_i \alpha = \alpha$;
    \item if $i,i+1 \not\in \alpha$, then $s_i \alpha = \alpha$;
    \item if $i \in \alpha$ but $i+1 \not\in \alpha$, then $s_i \alpha = \alpha'$, where $\alpha'$ has the value $i+1$ replaced by $i$;
    \item if $i+1 \in \alpha$ but $i \not\in \alpha$, then $s_i \alpha = \alpha'$, where $\alpha'$ has the value $i$ replaced by $i+1$.
\end{enumerate}

In each case, we use the train argument to derive information about the relations that must hold between different horizontal cross vertex weights. 

In the first case, we have
\[
    a_1^{\hh}(i+1,i)Z_\alpha(s_i\,x; y) = a_2^{\hh}(i+1,i)Z_\alpha(x;y).
\]
In the second case, 
\[
    Z_{\alpha}(s_i\,x; y) = Z_{\alpha}(x; y).
\]
That is, swapping empty rows has no effect on the partition function. The final two cases provide the most significant insights. In the third case, we have
\begin{equation}\label{third eq}
    a_1^{\hh}(i+1,i)Z_{\alpha}(s_i\,x;y) = b_2^{\hh}(i+1,i)Z_{s_i \alpha}(x;y) + c_1^{\hh}(i+1,i)Z_{\alpha}(x;y).
\end{equation}
Lastly, the fourth case gives us the following:
\begin{equation}\label{fourth eq}
    a_1^{\hh}(i+1, i)Z_{\alpha}(s_i\,x; y) = b_1^{\hh}(i+1, i)Z_{s_i\alpha}(x; y) + c_2^{\hh}(i+1, i)Z_{\alpha}(x; y).
\end{equation}

If we look at the third case in particular, we notice that we can rewrite the terms and define a Demazure-like operator:
\[
    \partial_i^{\hh} = \frac{a_1^{\hh}(i+1,i)s_i^{\hh} - c_1^{\hh}(i+1,i)}{b_2^{\hh}(i+1,i)},
\]
such that 
\[
    Z_{s_i\alpha}(x;y) = \partial_i^{\hh}(Z_\alpha(x;y)) = \left(\frac{a_1^{\hh}(i+1,i)s_i^{\hh} - c_1^{\hh}(i+1,i)}{b_2^{\hh}(i+1,i)}\right)Z_\alpha(x;y).
\]

If $\partial_i^{\hh}$ is defined as above, then the inverse $\overline{\partial}_i^{\hh} = (\partial_i^{\hh})^{-1}$ is defined by the fourth case: 
\[
    \overline{\partial}_i^{\hh} = \frac{a_1^{\hh}(i+1,i)s_i^{\hh} - c_2^{\hh}(i+1,i)}{b_1^{\hh}(i+1,i)}.
\]

By \cite{N22}, we have the following relations between the weights:

\begin{lemma}[\cite{N22}]\label{lem:weightsproperties}
    For all $i,j \in \mathbb{Z}$, we have
    \begin{align*}
        b_1^{\hh}(i,j) &= -b_1^{\hh}(j,i),\\
        b_2^{\hh}(i,j) &= -b_2^{\hh}(j,i),\\
        c_1^{\hh}(i,j) &= c_2^{\hh}(j,i),\\
        c_2^{\hh}(i,j) &= c_1^{\hh}(j,i).
    \end{align*}
    Moreover, we have
    \begin{equation}
        a_1^{\hh}(i,j)a_1^{\hh}(j,i) + b_1^{\hh}(i,j)b_2^{\hh}(i,j) = c_1^{\hh}(i,j)c_2^{\hh}(i,j).
    \end{equation}
\end{lemma}

The property that these two operators are inverses of each other follows from the relations on the weights. 
By definition,
    \begin{align*}
        \partial_i^{\hh}\overline{\partial}_i^{\hh} &= \frac{a_1(i+1,i)s_i^{\hh}-c_1^{\hh}(i+1,i)}{b_2^{\hh}(i+1,i)}\frac{a_1^{\hh}(i+1,i)s_i^{\hh} - c_2^{\hh}(i+1,i)}{b_1^{\hh}(i+1,i)} \\
        &= \frac{c_1^{\hh}(i+1,i)c_2^{\hh}(i+1,i)-a_1^{\hh}(i+1,i)a_1^{\hh}(i,i+1)}{b_1^{\hh}(i+1,i)b_2^{\hh}(i+1,i)}\\
        &= 1.
    \end{align*}
Throughout the computation, we use the properties from \Cref{lem:weightsproperties}.

We use a similar approach to define the vertical switch operator. Let $s_j^{\vv}$ be the vertical analog of $s_i^{\hh}$, where $s_j^{\vv}$ acts by transposing the spectral parameters corresponding to columns $j$ and $j+1$. We will use the notation $s_j$ when it is clear we are acting in the vertical case.

We may now define the operators we have derived from our applications of the train argument.
\begin{definition}\label{5.1}
    The \textit{switch operators} $\partial_i^{\hh},\partial_i^{\vv}$ and their inverses $\overline{\partial}_i^{\hh}$, $\overline{\partial}_j^{\vv}$ are defined as
    \begin{align*}
        \partial_i^{\hh} = \dfrac{a_1^{\hh}(i+1,i)s_i^{\hh} - c_1^{\hh}(i+1, i)}{b_2^{\hh}(i+1,i)}, \quad 
        \overline{\partial}_i^{\hh} = \frac{a_1^{\hh}(i+1,i)s_i^{\hh} - c_2^{\hh}(i+1,i)}{b_1^{\hh}(i+1,i)},\\
        \partial_j^{\vv} = \dfrac{a_1^{\vv}(j+1, j)s_j^{\vv} - c_1^{\vv}(j+1, j)}{b_1^{\vv}(j+1, j)}, \quad \overline{\partial}_j^{\vv} = \dfrac{a_1^{\vv}(j+1, j)s_j^{\vv} - c_2^{\vv}(j+1, j)}{b_2^{\vv}(j+1, j)}.
    \end{align*}

    Consider the general partition function $Z_{\alpha,\beta}(I; J)$. The switch operators act on $Z_{\alpha,\beta}(I; J)$ by switching the boundaries of adjacent rows and columns:
    \begin{align*}
        Z_{s_i\alpha,\beta}(I;J) &= 
        \begin{cases}
            \partial_i^{\hh}(Z_\alpha(I;J)) & \text{if $i \in \alpha, i+1 \notin \alpha$} \\[12pt]
            \overline{\partial}_i^{\hh}(Z_\alpha(I;J)) & \text{if $i \notin \alpha, i + 1 \in \alpha$}
        \end{cases},\\
        Z_{\alpha,s_j\beta}(x; y) &= 
        \begin{cases}
            \partial_j^{\vv}(Z_{\beta}(I; J)) & \text{if $j \in \beta, j+1 \notin \beta$} \\[12pt]
            \overline{\partial}_j^{\vv}(Z_{\beta}(I; J)) & \text{if $j \notin \beta, j+1 \in \beta$}
        \end{cases}.
    \end{align*}
\end{definition}

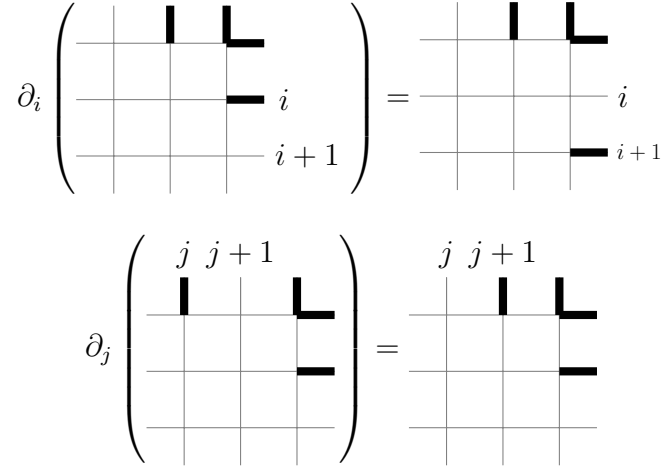
\begin{figure}[H]
    \centering
    $$ 
    \partial_i \left(
    \begin{tikzpicture}[baseline={([yshift=-.6ex]current bounding box.center)}, scale = 0.5]
    \draw[step=1.5cm,gray] (0.5,0.5) grid (5.5,5.5);
    \draw[black, line width=3.1] (4.5, 5.5) -- (4.5, 4.5);
    \draw[black, line width=3.1] (3, 5.5) -- (3, 4.5);
    \draw[black, line width=3.1] (4.5, 4.5) -- (5.5, 4.5);
    \draw[black,line width=3.1] (4.5, 3) -- (5.5, 3)node[right, black] {$i$};
    \draw[gray, opacity=0] (4.5, 1.5) -- (5.5, 1.5)node[right, black, opacity=1] {$i+1$};
    \end{tikzpicture} \right) \Large = 
    \begin{tikzpicture}[baseline={([yshift=-.6ex]current bounding box.center)}, scale = 0.5]
    \draw[step=1.5cm,gray] (0.5,0.5) grid (5.5,5.5);
    \draw[black, line width=3.1] (4.5, 5.5) -- (4.5, 4.5);
    \draw[black, line width=3.1] (3, 5.5) -- (3, 4.5);
    \draw[black, line width=3.1] (4.5, 4.5) -- (5.5, 4.5);
    \draw[gray, opacity=0] (4.5, 3) -- (5.5, 3)node[right, black, opacity=1, scale=0.7] {$i$};
    \draw[black, line width=3.1] (4.5, 1.5) -- (5.5, 1.5)node[right, black, scale=0.5] {$i+1$};
    \end{tikzpicture}
    $$
    
    $$ 
    \partial_j \left(
    \begin{tikzpicture}[baseline={([yshift=-.6ex]current bounding box.center)}, scale = 0.5]
    \draw[step=1.5cm,gray] (0.5,0.5) grid (5.5,5.5);
    \draw[black, line width=3.1] (1.5, 5.5) node[yshift=0.3cm] {$j$} -- (1.5, 4.5);
    \draw[black, line width=3.1] (4.5, 5.5) -- (4.5, 4.5);
    \draw[black, line width=3.1] (4.5, 4.5) -- (5.5, 4.5);
    \draw[black,line width=3.1] (4.5, 3) -- (5.5, 3);
    \draw[gray, opacity=0] (4.5, 1.5) -- (5.5, 1.5);
    \draw[gray, opacity=1] (3, 5.5) node[yshift=0.3cm, black] {$j+1$} -- (3, 4.5);
    \end{tikzpicture} \right) = 
    \begin{tikzpicture}[baseline={([yshift=-.6ex]current bounding box.center)}, scale = 0.5]
    \draw[step=1.5cm,gray] (0.5,0.5) grid (5.5,5.5);
    \draw[gray, opacity=1] (1.5, 5.5) node[yshift=0.3cm, black] {$j$} -- (1.5, 4.5);
    \draw[black, line width=3.1] (4.5, 5.5) -- (4.5, 4.5);
    \draw[black, line width=3.1] (3, 5.5) node[yshift=0.3cm, black] {$j+1$} -- (3, 4.5);
    \draw[black, line width=3.1] (4.5, 4.5) -- (5.5, 4.5);
    \draw[gray, opacity=0] (4.5, 3) -- (5.5, 3);
    \draw[black, line width=3.1] (4.5, 3) -- (5.5, 3);
    \end{tikzpicture}
    $$
    \caption{The effect of the horizontal and vertical switch operators on a $3 \times 3$ lattice model.}
\end{figure}

Note that we can use a composition of simple reflections $s_i$ to bring any signature $(n,n-1,\dots,1)$ to $\alpha$. For example, if we look at the horizontal direction:
\[
    (3,2,1) \xrightarrow{s_3} (4,2,1) \xrightarrow{s_4} (5,2,1) \xrightarrow{s_2} (5,3,1) \xrightarrow{s_1} (5,3,2).
\]

Hence, using the operator $\partial_i^{\hh}$, we can express $Z_\alpha$ in terms of $Z_\delta$, where $\delta = (n,n-1,\dots,1)$. We call $Z_\delta$ the \textit{base case}. Note that $Z_\delta = Z_n$ from \Cref{def:dwbc}. 

Thus, we can express $Z_{\alpha}$ as
\[
    Z_{\alpha} = \partial_{i_1}^{\hh} \partial_{i_2}^{\hh} \dots \partial_{i_k}^{\hh} (Z_\delta),
\] 
for some $\partial_{i_m}^{\hh}$'s. The vertical case follows this same property, but instead prescribes a strictly decreasing signature of $\beta = (\beta_1, \dots, \beta_n) \in \mathbb{N}^n$ to the top boundary.

We express this concept with more precise notation. Let $D = (d, d-1, \cdots, 1)$ where $d$ is the length of the signature $\alpha$.
Then let
\begin{equation*}
    \partial_{\alpha}^{\hh} = \prod_{k=1}^d \prod_{\ell = 1}^{ \alpha_{d-k+1} - D_k} \partial_{\alpha_{d-k+1}-\ell}^{\hh}.
\end{equation*}
Lastly, let $M$ denote the number of columns in the lattice, and let
\begin{equation*}
    \partial_{\beta}^V = \prod_{k=1}^d \prod_{\ell = \beta_k}^{M-k}\partial_{\ell}^{\vv}. 
\end{equation*}

Then, consider the partition function being normalized in the $a_1$ weight, so that the addition of extra empty columns or rows does not impact the partition function. 

Before we reach our theorem, we develop some useful notation regarding repeated use of the switch operators. For $i<j$, denote by $\partial_{[i,j]}$ the operator sequence $\partial_{j-1,j}\partial_{j-2,j-1}\dots \partial_{i,i+1}$ and $s_{[i,j]}$ the sequence $s_{j-1,j}s_{j-2,j-1}\dots s_{i,i+1}$. Let $\partial_{\alpha}^{\hh}$ be the expression $\partial_{[1,\alpha_n]}^{\hh}\partial_{[2,\alpha_{n-1}]}^{\hh}\dots \partial_{[n,\alpha_1]}^{\hh}$. Similarly, we denote $\partial_{\alpha}^{\vv}$ to be $\partial_{[1,\alpha_n]}^{\vv}\partial_{[2,\alpha_{n-1}]}^{\vv}\dots \partial_{[n,\alpha_1]}^{\vv}$.

\begin{theorem}\label{thm:reductiontobasecase}
    The partition function $Z_{\alpha,\beta}$ with right boundary $\alpha$ and top boundary $\beta$ is expressed in terms of the base case $Z_{\delta,\delta}$ as follows:
    \[
        Z_{\alpha,\beta} = \partial_\alpha^{\hh} \partial_\beta^{\vv}\left(Z_{\delta,\delta}\right).
    \]
\end{theorem}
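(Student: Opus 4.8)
The plan is to peel the horizontal and vertical directions apart, reduce each to a statement about a single word of simple transpositions, and then induct on the length of that word; the only inputs are Definition~\ref{5.1} (which already records, for each of the four cases of the action of $s_i$ on a boundary, how the corresponding switch operator transforms $Z_{\alpha,\beta}$) and one purely combinatorial bookkeeping lemma. First observe that Definition~\ref{5.1} gives the action of a horizontal operator $\partial_i^{\hh}$ on $Z_{\alpha,\beta}(I;J)$ for an \emph{arbitrary} fixed top boundary $\beta$, and symmetrically for the vertical operators with an arbitrary fixed right boundary. Hence the two directions decouple and it suffices to prove the one-sided identities
\[
    \partial_\beta^{\vv}\bigl(Z_{\delta,\delta}\bigr) = Z_{\delta,\beta} \qquad\text{and}\qquad \partial_\alpha^{\hh}\bigl(Z_{\delta,\beta}\bigr) = Z_{\alpha,\beta},
\]
for all strictly decreasing $\alpha,\beta$; composing these gives $\partial_\alpha^{\hh}\partial_\beta^{\vv}(Z_{\delta,\delta}) = Z_{\alpha,\beta}$. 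I will carry out the second; the first is word-for-word the same with $\hh$ replaced by $\vv$ and the horizontal $s_i$-action by the vertical $s_j$-action, the only extra point being that one works with $Z$ normalized in the $a_1$ weight, so that adjoining the empty rows and columns needed to accommodate large entries of $\alpha$ and $\beta$ does not change $Z$.

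The combinatorial heart is the following. Write $w_\alpha = s_{[1,\alpha_n]}s_{[2,\alpha_{n-1}]}\cdots s_{[n,\alpha_1]}$ for the word attached to $\partial_\alpha^{\hh}$, so $\partial_\alpha^{\hh}$ is the product of the corresponding $\partial_i^{\hh}$ in the same order. I claim that, applying the letters of $w_\alpha$ to $\delta=(n,n-1,\dots,1)$ one at a time from the right, the running signature stays strictly decreasing, every letter $s_i$ acts through case~(3) (namely $i$ present, $i+1$ absent), and the final signature is $\alpha$. The blocks are processed in the order $s_{[n,\alpha_1]},s_{[n-1,\alpha_2]},\dots$, and the block $s_{[n-k+1,\alpha_k]}=s_{\alpha_k-1}s_{\alpha_k-2}\cdots s_{n-k+1}$ raises the $k$-th coordinate from its current value $n-k+1$ to $\alpha_k$ in unit steps, while coordinates $1,\dots,k-1$ already sit at $\alpha_1>\cdots>\alpha_{k-1}$ and coordinates $k+1,\dots,n$ still sit at $n-k,\dots,1$. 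When the $k$-th coordinate passes through a value $v\in\{n-k+1,\dots,\alpha_k-1\}$ the letter applied is $s_v$: here $v$ is present, and $v+1$ is absent because $v+1\le\alpha_k<\alpha_{k-1}<\cdots<\alpha_1$ and $v+1\ge n-k+2>n-k$, so $v+1$ coincides with none of the other coordinates (in particular $\alpha_1=n$ forces $\alpha=\delta$, so no degenerate collision ever occurs); the same inequalities show the running signature is strictly decreasing. A short induction on $n$, peeling off the first coordinate, confirms $w_\alpha\cdot\delta=\alpha$.

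Granting the lemma, I prove by induction on $\ell(w)$ the statement: for any strictly decreasing $\gamma$ and any word $w$ which, applied letterwise from the right to $\gamma$, produces only case-(3) steps, one has $\partial_w^{\hh}(Z_{\gamma,\beta})=Z_{w\gamma,\beta}$. If $\ell(w)=0$ this is trivial. Otherwise write $w=w's_i$ with $s_i$ the rightmost letter; by hypothesis $s_i$ acts on $\gamma$ in case~(3), so the first branch of Definition~\ref{5.1} gives $\partial_i^{\hh}(Z_{\gamma,\beta})=Z_{s_i\gamma,\beta}$, and $w'$ applied to $s_i\gamma$ is still all case-(3), so the inductive hypothesis gives $\partial_{w'}^{\hh}(Z_{s_i\gamma,\beta})=Z_{w's_i\gamma,\beta}=Z_{w\gamma,\beta}$. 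Taking $\gamma=\delta$ and $w=w_\alpha$ and invoking the combinatorial lemma yields $\partial_\alpha^{\hh}(Z_{\delta,\beta})=Z_{\alpha,\beta}$, which together with the vertical analogue proves the theorem.

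The step I expect to be the main obstacle is the combinatorial lemma — in particular, the need to choose the block ordering (largest coordinate first) so that \emph{no} letter ever falls into cases (1), (2), or (4): a letter of the wrong type would require $\overline{\partial}_i^{\hh}$ or a trivial relation instead of $\partial_i^{\hh}$, and the product $\partial_\alpha^{\hh}$ as written would compute the wrong partition function. A secondary, purely bookkeeping, nuisance is checking that the two descriptions of $\partial_\alpha^{\hh}$ and $\partial_\beta^{\vv}$ given earlier in the section agree and keeping track of the $a_1$-normalization when empty rows and columns are appended; both are routine once the lemma is in hand.
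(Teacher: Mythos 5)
Your proposal is correct and takes essentially the same route as the paper: decouple the horizontal and vertical directions, apply the block word $s_{[1,\alpha_n]}s_{[2,\alpha_{n-1}]}\cdots s_{[n,\alpha_1]}$ to $\delta$ one letter at a time, and identify each step with an application of $\partial_i^{\hh}$ via case (3) of the $s_i$-action. The only difference is that you prove the combinatorial bookkeeping (every letter lands in case (3) and the word carries $\delta$ to $\alpha$) which the paper simply asserts in the line $s_{[1,\alpha_n]}\cdots s_{[n,\alpha_1]}\delta=\alpha$.
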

\begin{proof}
    We have
    \begin{align*}
        \partial_{\alpha}^{\hh}(Z_{\delta,\delta}) &= \partial_{[1,\alpha_n]}\partial_{[2,\alpha_{n-1}]}\dots \partial_{[n,\alpha_1]}(Z_{\delta,\delta}) \\
        &= \partial_{[1,\alpha_n]}\partial_{[2,\alpha_{n-1}]}\dots \partial_{[n-1,\alpha_2]}(Z_{s_{[n,\alpha_1]}\delta,\delta}) \\
        &= \dots \\
        &= Z_{s_{[1,\alpha_n]}s_{[2,\alpha_{n-1}]}\dots s_{[n,\alpha_1]}\delta, \beta}. 
    \end{align*}
    Since $s_{[1,\alpha_n]}s_{[2,\alpha_{n-1}]}\dots s_{[n,\alpha_1]}\delta = \alpha$, we get the result. The vertical case is analogous. 
\end{proof}

Thus we have proved it is possible to express the partition function of a lattice with arbitrary boundaries in terms of the operators acting on the base case.

\begin{example}
    Let $n = 5$. Take $Z_{\delta, \delta}$ in the $5 \times 5$ case, and take $Z_{\alpha, \beta}$ to be the partition function with signatures $\alpha = (5, 3, 2)$ and $\beta = (4, 2, 1)$:
    
    \begin{figure}[H]
        \centering
        \[
        Z_{\delta, \delta} = 
        \begin{tikzpicture}[baseline={([yshift=-1em]current bounding box.center)}, scale = 0.3]
            \grid{5}{5}
            
            \draw[path] (5,9) to (5,10);
            \draw[path] (7,9) to (7,10);
            \draw[path] (9,9) to (9,10);
            
            \draw[path] (9,9) to (10,9);
            \draw[path] (9,7) to (10,7);
            \draw[path] (9,5) to (10,5);
        \end{tikzpicture}, \quad Z_{\alpha, \beta} =  \begin{tikzpicture}[baseline={([yshift=-1em]current bounding box.center)}, scale = 0.3]
            \grid{5}{5}
            
            \draw[path] (1,9) to (1,10);
            \draw[path] (3,9) to (3,10);
            \draw[path] (7,9) to (7,10);
            
            \draw[path] (9,7) to (10,7);
            \draw[path] (9,3) to (10,3);
            \draw[path] (9,1) to (10,1);
        \end{tikzpicture}.
        \]
    \end{figure}
    
    We can apply our switch operators first in the \textcolor{blue}{vertical case}:
    \begin{figure}[H]
        \centering
        $Z_{\delta, \delta} = $
        \begin{tikzpicture}[baseline={([yshift=-1em]current bounding box.center)}, scale = 0.3]
            \grid{5}{5}
            
            \draw[path] (5,9) to (5,10);
            \draw[path] (7,9) to (7,10);
            \draw[path] (9,9) to (9,10);
            
            \draw[path] (9,9) to (10,9);
            \draw[path] (9,7) to (10,7);
            \draw[path] (9,5) to (10,5);
        \end{tikzpicture} $\xrightarrow{\partial_2^{\vv}}$
        \begin{tikzpicture}[baseline={([yshift=-1em]current bounding box.center)}, scale = 0.3]
            \grid{5}{5}
            
            \draw[path] (3,9) to (3,10);
            \draw[path] (7,9) to (7,10);
            \draw[path] (9,9) to (9,10);
            
            \draw[path] (9,9) to (10,9);
            \draw[path] (9,7) to (10,7);
            \draw[path] (9,5) to (10,5);
        \end{tikzpicture} $\xrightarrow{\partial_1^{\vv}}$
        \begin{tikzpicture}[baseline={([yshift=-1em]current bounding box.center)}, scale = 0.3]
            \grid{5}{5}
            
            \draw[path] (1,9) to (1,10);
            \draw[path] (7,9) to (7,10);
            \draw[path] (9,9) to (9,10);
            
            \draw[path] (9,9) to (10,9);
            \draw[path] (9,7) to (10,7);
            \draw[path] (9,5) to (10,5);
        \end{tikzpicture} ...
        
        $\xrightarrow{\partial_3^{\vv}}$ \begin{tikzpicture}[baseline={([yshift=-1em]current bounding box.center)}, scale = 0.3]
            \grid{5}{5}
            
            \draw[path] (1,9) to (1,10);
            \draw[path] (5,9) to (5,10);
            \draw[path] (9,9) to (9,10);
            
            \draw[path] (9,9) to (10,9);
            \draw[path] (9,7) to (10,7);
            \draw[path] (9,5) to (10,5);
        \end{tikzpicture} $\xrightarrow{\partial_2^{\vv}}$
        \begin{tikzpicture}[baseline={([yshift=-1em]current bounding box.center)}, scale = 0.3]
            \grid{5}{5}
            
            \draw[path] (1,9) to (1,10);
            \draw[path] (3,9) to (3,10);
            \draw[path] (9,9) to (9,10);
            
            \draw[path] (9,9) to (10,9);
            \draw[path] (9,7) to (10,7);
            \draw[path] (9,5) to (10,5);
        \end{tikzpicture} $\xrightarrow{\partial_4^{\vv}}$
        \begin{tikzpicture}[baseline={([yshift=-1em]current bounding box.center)}, scale = 0.3]
            \grid{5}{5}
            
            \draw[path] (1,9) to (1,10);
            \draw[path] (3,9) to (3,10);
            \draw[path] (7,9) to (7,10);
            
            \draw[path] (9,9) to (10,9);
            \draw[path] (9,7) to (10,7);
            \draw[path] (9,5) to (10,5);
        \end{tikzpicture} $ = Z_{\delta, \beta}$.
    \end{figure}
    
    Then, we apply our switch operators in the \textcolor{red}{horizontal case}:
    \begin{figure}[H]
        \centering
        $Z_{\delta, \beta} = $ \begin{tikzpicture}[baseline={([yshift=-1em]current bounding box.center)}, scale = 0.3]
            \grid{5}{5}
            
            \draw[path] (1,9) to (1,10);
            \draw[path] (3,9) to (3,10);
            \draw[path] (7,9) to (7,10);
            
            \draw[path] (9,9) to (10,9);
            \draw[path] (9,7) to (10,7);
            \draw[path] (9,5) to (10,5);
        \end{tikzpicture} $\xrightarrow{\partial_3^{\hh}}$
        \begin{tikzpicture}[baseline={([yshift=-1em]current bounding box.center)}, scale = 0.3]
            \grid{5}{5}
            
            \draw[path] (1,9) to (1,10);
            \draw[path] (3,9) to (3,10);
            \draw[path] (7,9) to (7,10);
            
            \draw[path] (9,9) to (10,9);
            \draw[path] (9,7) to (10,7);
            \draw[path] (9,3) to (10,3);
        \end{tikzpicture} $\xrightarrow{\partial_4^{\hh}}$
        \begin{tikzpicture}[baseline={([yshift=-1em]current bounding box.center)}, scale = 0.3]
            \grid{5}{5}
            
            \draw[path] (1,9) to (1,10);
            \draw[path] (3,9) to (3,10);
            \draw[path] (7,9) to (7,10);
            
            \draw[path] (9,9) to (10,9);
            \draw[path] (9,7) to (10,7);
            \draw[path] (9,1) to (10,1);
        \end{tikzpicture} $\xrightarrow{\partial_2^{\hh}}$
        
        \begin{tikzpicture}[baseline={([yshift=-1em]current bounding box.center)}, scale = 0.3]
            \grid{5}{5}
            
            \draw[path] (1,9) to (1,10);
            \draw[path] (3,9) to (3,10);
            \draw[path] (7,9) to (7,10);
            
            \draw[path] (9,9) to (10,9);
            \draw[path] (9,5) to (10,5);
            \draw[path] (9,1) to (10,1);
        \end{tikzpicture} $\xrightarrow{\partial_3^{\hh}}$
        \begin{tikzpicture}[baseline={([yshift=-1em]current bounding box.center)}, scale = 0.3]
            \grid{5}{5}
            
            \draw[path] (1,9) to (1,10);
            \draw[path] (3,9) to (3,10);
            \draw[path] (7,9) to (7,10);
            
            \draw[path] (9,9) to (10,9);
            \draw[path] (9,3) to (10,3);
            \draw[path] (9,1) to (10,1);
        \end{tikzpicture} $\xrightarrow{\partial_1^{\hh}}$
        \begin{tikzpicture}[baseline={([yshift=-1em]current bounding box.center)}, scale = 0.3]
            \grid{5}{5}
            
            \draw[path] (1,9) to (1,10);
            \draw[path] (3,9) to (3,10);
            \draw[path] (7,9) to (7,10);
            
            \draw[path] (9,7) to (10,7);
            \draw[path] (9,3) to (10,3);
            \draw[path] (9,1) to (10,1);
        \end{tikzpicture} $= Z_{\alpha, \beta}$.
    \end{figure}
    
    From the illustrations, we see that the sequence of applications of switch operators that takes us from the base case to $Z_{\alpha, \beta}$ is
    \begin{align*}
        Z_{\alpha, \beta} = \textcolor{red}{\partial_1^{\hh} \partial_3^{\hh} \partial_2^{\hh} \partial_4^{\hh} \partial_3^{\hh}} \textcolor{blue}{\partial_4^{\vv} \partial_2^{\vv} \partial_3^{\vv} \partial_1^{\vv} \partial_2^{\vv}}(Z_{\delta, \delta}),
    \end{align*}
    which exactly matches our expectations from Theorem 5.3 based off of $\partial_{\alpha}^{\hh}$ and $\partial_{\beta}^{\vv}$. 
\end{example}

We now present an application of these results. Let the weight functions be as follows:
\begin{equation}\label{eq:ffweights}
    \begin{aligned}
        a_1(i,j) &= 1-b_j x_i,\\
        a_2(i,j) &= y_i + a_j,
    \end{aligned} \quad
    \begin{aligned}
        b_1(i,j) &= 1+b_j y_i,\\
        b_2(i,j) &= x_i - a_j,
    \end{aligned} \quad
    \begin{aligned}
        c_1(i,j) &= 1-a_jb_j,\\
        c_2(i,j) &= x_i+y_i.
    \end{aligned}
\end{equation}

These weights are from \cite{N23}, where their partition functions were shown to generalize various families of the Schur functions. These functions are called \textit{free fermionic Schur functions}. By Corollary 2.14, the partition function with the domain wall boundary conditions is 
\[
    Z_n^{\operatorname{DWBC}}(x,y; a,b) = \prod_{i<j}(x_i-y_j)(1-a_ib_j).
\]
Let $\alpha = (\alpha_1,\alpha_2,\dots,\alpha_n)$. Consider the partition function $Z_{\alpha,\delta}(x,y; a,b)$ which generalizes various non-supersymmetric Schur functions. Then we have the following result:

\begin{proposition}
    We have the following evaluation of the partition function:
    \[
        Z_{\alpha,\delta}(x,y; a,b) = \partial_\alpha^{\vv}\left(\prod_{i=1}^{n}\prod_{j=n+1}^{\alpha_1}(1-b_jx_i)\prod_{i<j}(x_i-y_j)(1-a_ib_j)\right).
    \]
\end{proposition}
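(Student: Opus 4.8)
The plan is to obtain this as the column version of \Cref{thm:reductiontobasecase} specialized to the free fermionic weights \eqref{eq:ffweights}. The starting point is the base case $Z_{\delta,\delta}$: for these weights it is the $n\times n$ domain wall partition function, which by Corollary~2.14 of \cite{N23} equals $\prod_{i<j}(x_i-y_j)(1-a_ib_j)$. To be able to switch the top boundary all the way out to $\alpha$ we must first place this base case on a lattice with at least $\alpha_1$ columns; the columns $n+1,\dots,\alpha_1$ that get adjoined are unoccupied in the base configuration --- no path can reach them, since in $Z_{\delta,\delta}$ every path is confined to the first $n$ columns and paths never move leftward --- so each such column $j$ consists entirely of $a_1$-vertices and contributes $\prod_{i=1}^{n}a_1(i,j)=\prod_{i=1}^{n}(1-b_jx_i)$. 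Hence the bracketed expression is exactly $Z_{\delta,\delta}$ realized on this enlarged lattice (keeping the $a_1$-weights rather than normalizing them out).

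Next I would run the argument of the proof of \Cref{thm:reductiontobasecase} in the vertical direction. The signature $\alpha$ is strictly decreasing with $\alpha_1\ge n$, so it is reached from $\delta=\delta_n$ by a sequence of simple transpositions $s_j^{\vv}$ in each of which a path sits in column $j$ and column $j+1$ is empty; this is always the third case of the action of $s_j^{\vv}$ on the top boundary, never the fourth, so the relevant relation is the vertical analogue of \eqref{third eq}, namely $Z_{s_j\beta}=\partial_j^{\vv}(Z_\beta)$ with the unbarred operator, at every step. Composing these in the order encoded by the definition of $\partial_\alpha^{\vv}$ (which moves the farthest-traveling path out first, so that every intermediate signature is still strictly decreasing) produces exactly the operator $\partial_\alpha^{\vv}$; and since the vertical operators only touch the column spectral parameters and the top boundary, the right boundary stays equal to $\delta$ throughout. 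Applying $\partial_\alpha^{\vv}$ to the enlarged base case therefore yields $Z_{\alpha,\delta}$, which is the claim.

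The step I expect to be the real obstacle is the first one: fixing precisely which enlarged lattice is meant and checking that the adjoined columns are frozen to $a_1$-vertices in the way claimed --- in particular, making sure the column spectral parameters $j=n+1,\dots,\alpha_1$ are exactly the new columns' parameters, so that the operators composing $\partial_\alpha^{\vv}$ are permuting the right objects. The remaining bookkeeping (that only unbarred operators occur, and that the ordering in $\partial_\alpha^{\vv}$ keeps the intermediate signatures admissible) is a direct transcription of the proof of \Cref{thm:reductiontobasecase}.
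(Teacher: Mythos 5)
Your proposal is correct and follows exactly the route the paper takes: the paper's own proof is a two-sentence appeal to \Cref{thm:reductiontobasecase} together with the domain-wall evaluation from \cite{N23}, with the remark that ``the extra factor comes from the empty sites.'' You have simply filled in the details the paper leaves implicit --- identifying the adjoined columns $j=n+1,\dots,\alpha_1$ as frozen to $a_1$-vertices and checking that only the unbarred vertical operators arise --- which is a faithful elaboration of the same argument rather than a different one.
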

\begin{proof}
    By \Cref{thm:reductiontobasecase} and the explicit value of the partition function with the domain wall boundary conditions. Note that the extra factor comes from the ``empty'' sites in the six vertex model.
\end{proof}

This result generalizes the relation for the factorial Schur functions as explored in equations (18) and (19) in \cite{BMN14}. In particular, the equation (19) can be written in terms of the switch operators as 
\[
    s_\mu(z|\sigma_i \alpha) = \partial_i^{\vv}(s_\lambda(z|\alpha)).
\]

Similarly to Corollary 1 in \cite{BMN14}, we prove that the partition function is asymptotically symmetric in the column parameters.

\begin{corollary}
    The free fermionic Schur functions $Z_{\alpha,\delta}$ are asymptotically symmetric in variables $a_j,b_j$.
\end{corollary}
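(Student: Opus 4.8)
\emph{Proof strategy.} The plan is to read the symmetry off directly from the train argument, reusing the very case analysis that produced the switch operators. Recall that when the vertical train argument is applied to a pair of adjacent columns $j,j+1$ that are both missing from the top boundary — the second of the four boundary cases analyzed above — the vertical cross vertex pushed through the lattice enters from above as the empty cross vertex $a_1^{\vv}$ and, since the bottom boundary is empty in every column, also leaves from below as $a_1^{\vv}$; hence the train identity degenerates from a Demazure-type recursion to the bare equality $s_j^{\vv}(Z_{\alpha,\delta})=Z_{\alpha,\delta}$. For the weights \eqref{eq:ffweights} the spectral datum carried by column $j$ is the pair $(a_j,b_j)$, so $s_j^{\vv}$ acts by interchanging $(a_j,b_j)\leftrightarrow(a_{j+1},b_{j+1})$; in words, exchanging two columns that carry no boundary path is an honest symmetry of the model, not merely a functional relation.

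First I would fix the boundary data and note that the top boundary is supported on finitely many columns, so there is a threshold $N$ (one may take $N$ to be one more than the largest occupied column) such that both $j$ and $j+1$ are unoccupied on the top for every $j\ge N$. By the previous paragraph this gives $s_j^{\vv}(Z_{\alpha,\delta})=Z_{\alpha,\delta}$ for all $j\ge N$, and the transpositions $\{s_j^{\vv}:j\ge N\}$ generate the symmetric group on the column indices $\{N,N+1,\dots\}$. Hence $Z_{\alpha,\delta}$ is invariant under every permutation of the column parameters $(a_j,b_j)$ with $j\ge N$, which is the asserted asymptotic symmetry. The same reasoning applies verbatim to any maximal block of consecutive columns all of which are absent from the top boundary, so in fact $Z_{\alpha,\delta}$ is separately symmetric in the $(a_j,b_j)$ across each ``gap'' of the boundary, the asymptotic statement being the case of the infinite terminal block.

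As a cross-check one could instead argue from the explicit evaluation in the Proposition above: the product $\prod_{i}\prod_{j>n}(1-b_jx_i)\cdot\prod_{i<j}(x_i-y_j)(1-a_ib_j)$ is already symmetric in the terminal column parameters, and $\partial_\alpha^{\vv}$, being built only from the operators $s_\ell^{\vv}$ with $\ell$ below the threshold, commutes past the terminal transpositions and so preserves that symmetry. This route is less transparent, and the single delicate point in either approach is the claim that in the second case the two cross-vertex weights cancel exactly rather than leaving a ratio $a_1^{\vv}(j+1,j)/a_1^{\vv}(j,j+1)$ — but this is precisely the normalization of the empty cross vertex already invoked when that case was recorded, so it adds nothing new, and I expect the whole argument to go through with essentially no friction.
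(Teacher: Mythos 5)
Your argument is essentially the paper's own proof: both rest on the observation that for indices exceeding all parts of the boundary signature, the train argument degenerates to the bare invariance $Z_{\alpha,\delta}(x,y;a,b)=Z_{\alpha,\delta}(x,y;s_j a,s_j b)$, and adjacent transpositions of the tail indices generate the full symmetric group there. The extra observations (symmetry across each interior gap, and the cross-check via the explicit product formula) are correct but not needed.
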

\begin{proof}
    Indeed, for large enough indices which exceed all partis of $\alpha$, the switch operators give the simplified expression
    \[
        Z_{\alpha,\beta}(x,y; a,b) = Z_{\alpha,\beta}(x,y; s_i a, s_i b).
    \]
    Hence, the partition function is asymptotically symmetric in the column parameters.
\end{proof}

We note that \Cref{thm:reductiontobasecase} provides the explicit expression for the partition functions $Z_{\alpha,\beta}(x,y; a,b)$. While the specializations $\beta = \delta$ and $\alpha = \delta$ produce the generalizations of the factorial Schur functions, the meaning of the function $Z_{\alpha,\beta}(x,y; a,b)$ remains unclear. This function could be seen as two sided interpolation of the two kinds of factorial Schur functions (or their generalizations).

\bibliographystyle{alphaurl}
\bibliography{switch.bib}

\end{document}